\documentclass[11pt]{article}

\usepackage[T1]{fontenc}
\usepackage[utf8]{inputenc}
\usepackage{amsmath,amssymb,amsthm,mathtools}
\usepackage{libertinus-type1}
\usepackage{libertinust1math}
\usepackage[margin=1in]{geometry}
\usepackage{xcolor}
\usepackage{enumitem}
\usepackage{microtype}
\usepackage[colorlinks=true,linkcolor=blue!50!black,citecolor=blue!50!black,urlcolor=blue!50!black]{hyperref}
\hypersetup{
  pdftitle={A universal discriminant formula for pencils of quadrics},
  pdfauthor={Ari Krishna},
  pdfsubject={Algebraic geometry},
  pdfkeywords={pencils of quadrics, discriminants, binary forms, Grassmannians, Chow classes}
}

\usepackage{aliascnt}
\newtheorem{theorem}{Theorem}[section]
\newaliascnt{proposition}{theorem}
\newtheorem{proposition}[proposition]{Proposition}
\aliascntresetthe{proposition}
\newaliascnt{corollary}{theorem}
\newtheorem{corollary}[corollary]{Corollary}
\aliascntresetthe{corollary}
\newaliascnt{lemma}{theorem}
\newtheorem{lemma}[lemma]{Lemma}
\aliascntresetthe{lemma}
\theoremstyle{definition}
\newaliascnt{definition}{theorem}

\aliascntresetthe{definition}
\newaliascnt{remark}{theorem}
\newtheorem{remark}[remark]{Remark}
\aliascntresetthe{remark}
\newaliascnt{example}{theorem}
\newtheorem{example}[example]{Example}
\aliascntresetthe{example}
\newaliascnt{convention}{theorem}
\newtheorem{convention}[convention]{Convention}
\aliascntresetthe{convention}
\usepackage[nameinlink,noabbrev]{cleveref}
\crefname{theorem}{theorem}{theorems}
\Crefname{theorem}{Theorem}{Theorems}
\crefname{proposition}{proposition}{propositions}
\Crefname{proposition}{Proposition}{Propositions}
\crefname{corollary}{corollary}{corollaries}
\Crefname{corollary}{Corollary}{Corollaries}
\crefname{lemma}{lemma}{lemmas}
\Crefname{lemma}{Lemma}{Lemmas}
\crefname{definition}{definition}{definitions}
\Crefname{definition}{Definition}{Definitions}
\crefname{remark}{remark}{remarks}
\Crefname{remark}{Remark}{Remarks}
\crefname{example}{example}{examples}
\Crefname{example}{Example}{Examples}
\crefname{convention}{convention}{conventions}
\Crefname{convention}{Convention}{Conventions}

\newcommand{\bbk}{\mathbb k}
\newcommand{\bbP}{\mathbb P}

\newcommand{\bbZ}{\mathbb Z}

\newcommand{\OO}{\mathcal O}
\newcommand{\cB}{\mathcal B}
\newcommand{\cD}{\mathcal D}
\newcommand{\cE}{\mathcal E}
\newcommand{\cJ}{\mathcal J}
\newcommand{\cL}{\mathcal L}

\newcommand{\cU}{\mathcal U}
\newcommand{\Gr}{\operatorname{Gr}}
\newcommand{\Sym}{\operatorname{Sym}}
\newcommand{\Hom}{\operatorname{Hom}}

\newcommand{\PGL}{\operatorname{PGL}}
\newcommand{\GL}{\operatorname{GL}}
\newcommand{\Conf}{\operatorname{Conf}}
\newcommand{\disc}{\operatorname{Disc}}

\newcommand{\blfootnote}[1]{%
  \begingroup
  \renewcommand{\thefootnote}{}%
  \footnotetext{#1}%
  \addtocounter{footnote}{-1}%
  \endgroup
}

\title{A universal discriminant formula for pencils of quadrics}

\author{Ari Krishna}

\begin{document}

\maketitle

\blfootnote{2020 \emph{Mathematics Subject Classification.} Primary 14C17; Secondary 14M15, 14N10, 15A22.}
\blfootnote{\emph{Key words and phrases.} Pencils of quadrics; discriminants; binary forms; Grassmannians; Chow classes; Segre symbols.}

\begin{abstract}
Let $V$ be a vector space of dimension $n+1$ over an algebraically closed field $\bbk$ of characteristic zero, and let
\[
  G_n=\Gr\bigl(2,\Sym^2V^\vee\bigr)
\]
be the Grassmannian parametrizing pencils of quadrics in $\bbP(V)\cong \bbP^n$. The determinant of the universal pencil defines a universal binary form of degree $n+1$. We prove that the divisor $\cD_n\subseteq G_n$ of pencils whose determinant binary form has a multiple root has Chow class
\[
  [\cD_n]=n(n+1)\sigma_1\in A^1(G_n),
\]
where $\sigma_1=c_1(S^\vee)$ and $S$ is the tautological rank-two subbundle on $G_n$. More generally, the higher-contact loci of determinant binary forms are computed by a universal jet formula. We also formulate the determinant-root collision strata as refined pullbacks of the universal collision strata for binary forms. For $n=3$, the main formula recovers the class $12\sigma_1$ for the boundary divisor in $\Gr(2,10)$ that the author established in a prior paper.
\end{abstract}

\section{Introduction}

Let $V$ be a vector space of dimension $n+1$ over an algebraically closed field $\bbk$ of characteristic zero. A pencil of quadrics in $\bbP(V)\cong \bbP^n$ is a two-dimensional subspace
\[
  L\subseteq \Sym^2V^\vee.
\]
After choosing a basis $Q_0,Q_1$ of $L$, the singular members of the pencil are detected by the vanishing of the determinant
\[
  \Delta_L(s,t)=\det(sQ_0+tQ_1).
\]
This is a binary form of degree $n+1$, and its roots record the singular quadrics in the pencil, counted with multiplicity.

Set
\[
  G_n:=\Gr\bigl(2,\Sym^2V^\vee\bigr).
\]
Let $S\subset \Sym^2V^\vee\otimes\OO_{G_n}$ be the tautological rank-two subbundle, and write
\[
  \sigma_1=c_1(S^\vee).
\]
The determinant restricts to the universal pencil as a universal binary $(n+1)$-ic. The question that we answer in this note is: what is the class of the locus where this binary form has a repeated root?

To this end, our main result is the following. 
\begin{theorem}\label{thm:main}
Let $V$ be a vector space of dimension $n+1$ over an algebraically closed field of characteristic zero. Let
\[
  G_n=\Gr\bigl(2,\Sym^2V^\vee\bigr),
\]
and let $S$ be its tautological rank-two subbundle. Let $\cD_n\subset G_n$ be the locus of pencils $L$ for which the determinant binary form $\det(q)|_L$ has a multiple root. Then $\cD_n$ is an effective Cartier divisor and
\[
  [\cD_n]=n(n+1)c_1(S^\vee)=n(n+1)\sigma_1
  \quad\text{in } A^1(G_n).
\]
Equivalently, if $d=n+1$, then
\[
  [\cD_n]=d(d-1)c_1(S^\vee).
\]
\end{theorem}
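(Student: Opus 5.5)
The plan is to realize $\cD_n$ as the zero locus of a section of a line bundle, namely the pullback of the classical discriminant of binary forms. The determinant defines a morphism of vector bundles on $G_n$: restricting $\det\colon \Sym^{n+1}(\Sym^2V)\to\bbk$, a polynomial of degree $n+1$ on $\Sym^2V^\vee$, to the universal pencil gives a section
\[
  \Delta\in H^0\bigl(G_n,\Sym^{n+1}S^\vee\bigr),
\]
the universal binary $(n+1)$-ic. At a point $L$, $\Delta$ is the binary form $\Delta_L$ on $L$. The discriminant of binary forms of degree $d=n+1$ is a $\GL_2$-semi-invariant polynomial on $\Sym^dW^\vee$ of degree $2(d-1)$. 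Under $g\in\GL(W)$ it transforms by $\det(g)^{-d(d-1)}$, up to the sign convention for the dual. Hence it globalizes to a map
\[
  \disc\colon \Sym^d S^\vee \to (\det S^\vee)^{\otimes d(d-1)},
\]
homogeneous of degree $2(d-1)$ on fibers. Composing gives a section $\disc(\Delta)$ of $(\det S^\vee)^{\otimes d(d-1)}$.

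Its zero scheme is set-theoretically $\cD_n$. If $\disc(\Delta)$ is not identically zero, its divisor is an effective Cartier divisor of class $d(d-1)c_1(S^\vee)=n(n+1)\sigma_1$.

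The weight check is routine. The discriminant has degree $2d-2$ in the coefficients and is $\mathrm{SL}_2$-invariant. The scalar matrix $\lambda I$ acts on degree-$d$ forms by $\lambda^{\pm d}$, so it scales the discriminant by $\lambda^{\pm d(2d-2)}$. Since $\det(\lambda I)=\lambda^2$, this matches the character $\det^{\pm d(d-1)}$. I will fix the sign by a single test computation, which must make the class effective: restrict to a line in $G_n$, i.e.\ pencils containing a fixed $Q_0$ inside a fixed $3$-dimensional net, where $\sigma_1$ has degree $1$.

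Two points remain.

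\textbf{Non-vanishing.} I exhibit one pencil with $n+1$ distinct singular members: $Q_0=\sum x_i^2$ and $Q_1=\sum a_ix_i^2$ with distinct $a_i$. Then $\Delta_L=\prod(s+a_it)$, which has distinct roots. This uses characteristic zero, or at least $\operatorname{char}\neq 2$, together with $\bbk$ infinite.

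\textbf{Scheme structure.} As stated, the theorem only asks that $\cD_n$ be effective Cartier with the given class. So I take $\cD_n$ to be the scheme $\{\disc(\Delta)=0\}$, whose support is exactly the multiple-root locus.

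If one wants $\cD_n$ reduced, the main obstacle is showing that $\disc(\Delta)$ vanishes to order one along each component. I would argue as follows. The map $\Delta\colon G_n\dashrightarrow\{\text{binary forms}\}$, taken locally in trivializations, is submersive at a general pencil with exactly one double root. At such a pencil, perturbing $Q_1$ changes $\Delta_L$ by $\operatorname{tr}(\operatorname{adj}(Q)\,\delta Q)$-type terms, which move the double root apart to first order. This is a transversality argument to the reduced discriminant hypersurface.

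Irreducibility of $\cD_n$ also follows from this picture. $\cD_n$ is dominated by the irreducible incidence variety of pairs (singular quadric $Q$, pencil $L\ni Q$ tangent to the determinantal hypersurface at $Q$).

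As a consistency check, for $n=3$ the formula gives $12\sigma_1$, matching the author's earlier result.
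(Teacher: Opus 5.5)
Your proposal is correct and is essentially the paper's proof. You restrict the determinant to get the universal binary $d$-ic $\Delta$ (the paper keeps the constant twist $M=(\det V^\vee)^{\otimes 2}$, which you tacitly trivialize), globalize the discriminant as a section of $(\det S^\vee)^{\otimes d(d-1)}$, and use the diagonal pencil to show it is nonzero. The differences are minor: you obtain the weight $d(d-1)$ from $\GL_2$-semi-invariance (the content of the paper's remark on weights) rather than from the root factorization $c^{2d-2}\prod_{i<j}(\ell_i\wedge\ell_j)^2$, and your reducedness and irreducibility sketch goes beyond what the paper proves.
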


The proof is a direct consequence of the coordinate-free transformation rule for the discriminant of a binary form. The determinant gives a section
\[
  \Delta\in H^0\left(G_n,\Sym^dS^\vee\otimes M\right),
  \qquad d=n+1,
\]
where
\[
  M=(\det V^\vee)^2
\]
is a fixed one-dimensional vector space. If $F$ is a binary form of degree $d$ with values in a line bundle $\cL$, then its discriminant is a section of
\[
  \cL^{\otimes(2d-2)}\otimes (\det E^\vee)^{\otimes d(d-1)},
\]
where $E$ is the underlying rank-two bundle of variables. Applying this to $E=S$ and $\cL=M$, and observing that $M$ is constant over $G_n$, we get
\[
  [\cD_n]=d(d-1)c_1(S^\vee).
\]

There is also a jet-theoretic form of the same computation. Let
\[
  \pi:\bbP(S)\to G_n
\]
be the universal parameter line of the pencil, and put
\[
  h=c_1\bigl(\OO_{\bbP(S)}(1)\bigr).
\]
Then, the refined class of the locus where the determinant binary $d$-ic has a marked root of multiplicity at least $m$ is
\[
  \pi_*
  \prod_{i=0}^{m-1}
  \left((d-2i)h+i\sigma_1\right).
\]
For $m=2$, this gives
\[
  d(d-1)\sigma_1,
\]
recovering \cref{thm:main}.

The motivation comes from orbit geometry. In the case $n=3$, the Grassmannian
\[
  \Gr(2,\Sym^2\bbk^{4,\vee})\cong \Gr(2,10)
\]
parametrizes pencils of quadrics in $\bbP^3$. The determinant is a binary quartic. On the open locus where this quartic is squarefree, the projective equivalence class of its unordered four roots is measured by the $j$-invariant. In \cite{Krishna26}, this construction yields a one-parameter family of codimension-one $\PGL_4$-orbit closures, and the divisor class $12\sigma_1$ appears as the class of every divisorial fiber of the rational $j$-map.

For all $n$, determinant-root geometry for pencils of quadrics is pulled back from the geometry of binary $(n+1)$-ics. The universal discriminant formula is the first expression of this principle.

This should be distinguished from the finer Segre-symbol stratification of pencils of quadrics. The determinant records the multiplicities of the singular members of the pencil, but it does \textit{not} always record the rank type of those singular quadrics. Thus, the universal discriminant theorem controls the determinant-collision part of the story; separating all Segre strata requires additional incidence calculations along the lower-rank loci of symmetric matrices. Refer to \cite{FMS21} for the Segre-symbol perspective on pencils of quadrics.

\section{The universal determinant binary form}

Let
\[
  \dim V=d=n+1,
  \qquad
  W=\Sym^2V^\vee,
  \qquad
  G_n=\Gr(2,W).
\]
A point of $W$ is a quadratic form on $V$, equivalently, a symmetric linear map
\[
  q:V\to V^\vee.
\]
Taking determinants gives
\[
  \det(q)\in \Hom(\det V,\det V^\vee)\cong (\det V^\vee)^2.
\]
Thus, coordinate-free, the determinant is a homogeneous polynomial map
\[
  \det:W\to M,
  \qquad
  M:=(\det V^\vee)^2,
\]
of degree $d$. Equivalently,
\[
  \det\in \Sym^dW^\vee\otimes M.
\]

Let
\[
  S\subseteq W\otimes\OO_{G_n}
\]
be the tautological subbundle. Restricting the determinant polynomial to $S$ gives a section
\[
  \Delta\in H^0\left(G_n,\Sym^dS^\vee\otimes M\right).
\]
For a point $L\in G_n$, the fiber
\[
  \Delta_L\in \Sym^dL^\vee\otimes M
\]
is the determinant binary form of the pencil $L$.

If $L=\langle Q_0,Q_1\rangle$ and $[s:t]$ are homogeneous coordinates on $\bbP(L)$, then
\[
  \Delta_L(s,t)=\det(sQ_0+tQ_1).
\]
The zero divisor of $\Delta_L$ on $\bbP(L)$ is the divisor of singular quadrics in the pencil. We define
\[
  \cD_n:=\{L\in G_n:\disc(\Delta_L)=0\}.
\]
Equivalently, $\cD_n$ is the locus where the determinant binary form is not squarefree. This convention includes the locus of pencils for which the determinant is identically zero.

\begin{lemma}\label{lem:nonzero}
The discriminant section $\disc(\Delta)$ is not identically zero on $G_n$.
\end{lemma}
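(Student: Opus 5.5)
It suffices to exhibit a single pencil $L\in G_n$ whose determinant binary form $\Delta_L$ is squarefree. Then $\disc(\Delta_L)\neq 0$, so the section $\disc(\Delta)$ does not vanish at the point $L$. Since $G_n$ is irreducible, this already shows that $\disc(\Delta)$ is not identically zero, and hence that its zero locus $\cD_n$ is a proper closed subset.

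Choose a basis $e_0,\dots,e_n$ of $V$ with dual coordinates $x_0,\dots,x_n$. Pick $n+1$ pairwise distinct scalars $\lambda_0,\dots,\lambda_n\in\bbk$; this is possible because $\bbk$ is infinite. Set
\[
  Q_0=\sum_{i=0}^n x_i^2,\qquad Q_1=\sum_{i=0}^n \lambda_i x_i^2 .
\]
We need $Q_0$ and $Q_1$ to be linearly independent, so that $L=\langle Q_0,Q_1\rangle$ is a genuine point of $G_n$. This holds whenever $n\ge 1$: two distinct $\lambda_i$ prevent $Q_1$ from being a multiple of $Q_0$. The case $n=0$ is degenerate, since then $\dim \Sym^2V^\vee=1$ and $G_n$ is empty, so we assume $n\ge1$ throughout.

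In the chosen basis both forms are diagonal, with $Q_0$ the identity and $Q_1=\operatorname{diag}(\lambda_0,\dots,\lambda_n)$. We use $\operatorname{char}\bbk\neq 2$ to identify quadratic forms with symmetric matrices; this only rescales the determinant by a nonzero constant. The determinant binary form is therefore
\[
  \Delta_L(s,t)=\det(sQ_0+tQ_1)=\prod_{i=0}^n (s+\lambda_i t).
\]
This is a product of $d=n+1$ linear forms defining the points $[s:t]=[-\lambda_i:1]$ of $\bbP(L)$. These points are pairwise distinct because the $\lambda_i$ are distinct. Hence $\Delta_L$ is a nonzero binary form of degree $d$ with $d$ distinct roots, i.e. it is squarefree.

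A nonzero squarefree binary form has nonzero discriminant, by the standard characterization of the discriminant as vanishing exactly on forms with a repeated root or on the zero form. So $\disc(\Delta)(L)=\disc(\Delta_L)\neq0$, and the section is not identically zero.

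The only step needing care is the bookkeeping of identifications. The determinant lies in $M$ rather than in $\bbk$, and the discriminant lies in a line, not a field. Trivializing both by the chosen basis is harmless, because nonvanishing at a point does not depend on the choice of trivialization.
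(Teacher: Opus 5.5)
Your proof is correct and is essentially the paper's argument. The paper also takes the diagonal pencil $\langle \sum_i \lambda_i x_i^2, \sum_i x_i^2\rangle$ with pairwise distinct $\lambda_i$, computes $\det(sQ_0+tQ_1)=\prod_i(s\lambda_i+t)$, and concludes from the $d$ distinct roots that the discriminant is nonzero. Your extra bookkeeping is sound and makes explicit what the paper leaves implicit: linear independence of the two forms for $n\ge 1$, the nonzero constant coming from identifying quadratic forms with symmetric matrices, and the trivialization of the determinant and discriminant lines.
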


\begin{proof}
Choose pairwise distinct scalars
\[
  \lambda_1,\dots,\lambda_d\in \bbk.
\]
Let
\[
  Q_0=\sum_{i=1}^d \lambda_i x_i^2,
  \qquad
  Q_1=\sum_{i=1}^d x_i^2.
\]
Then,
\[
  \det(sQ_0+tQ_1)=\prod_{i=1}^d(s\lambda_i+t).
\]
This binary form has $d$ distinct roots, so its discriminant is nonzero. Therefore, $\disc(\Delta)$ is not the zero section.
\end{proof}

Since $G_n$ is smooth and integral, \cref{lem:nonzero} implies that the zero scheme of $\disc(\Delta)$ is an effective Cartier divisor. Consequently, it remains to compute the line bundle wherein the discriminant section lives.

\section{The discriminant line bundle}

We recall the coordinate-free discriminant of a family of binary forms.

\begin{lemma}\label{lem:disc-line}
Let $B$ be a scheme, let $E$ be a rank-two vector bundle on $B$, and let $\cL$ be a line bundle. Let
\[
  F\in H^0\left(B,\Sym^dE^\vee\otimes \cL\right)
\]
be a family of binary forms of degree $d$. Then the discriminant of $F$ is naturally a section
\[
  \disc(F)
  \in
  H^0\left(
  B,
  \cL^{\otimes(2d-2)}\otimes(\det E^\vee)^{\otimes d(d-1)}
  \right).
\]
\end{lemma}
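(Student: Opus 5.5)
The plan is to define the discriminant locally in coordinates, check that the local definitions glue, and read off the line bundle from the transformation law. Since the construction is local on $B$, I would cover $B$ by opens $U$ on which both $E$ and $\cL$ are trivial. On such a $U$, choose a frame $e_0,e_1$ of $E$ with dual coordinates $s,t$, and a generator $\ell$ of $\cL$. Then
\[
  F|_U=\Bigl(\sum_{k=0}^d a_k s^{d-k}t^k\Bigr)\otimes \ell
\]
with $a_k\in\OO(U)$. I set $\disc(F)|_U:=\disc(a_0,\dots,a_d)$, where $\disc$ is the classical universal discriminant polynomial in $\bbZ[a_0,\dots,a_d]$, normalized as the resultant $\mathrm{Res}(f_s,f_t)$ up to a fixed constant. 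It is homogeneous of degree $2d-2$ in the $a_k$.

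The key step is the transformation law under change of trivialization. There are two rescalings to track.
\begin{itemize}[nosep]
\item Rescaling $\ell\mapsto u\ell$ by a unit $u$ sends $a_k\mapsto u^{-1}a_k$. The discriminant therefore changes by $u^{-(2d-2)}$, which is exactly the cocycle of $\cL^{\otimes(2d-2)}$.
\item Changing the frame of $E$ by $g\in\GL_2(\OO(U\cap U'))$ acts on the coordinates $(s,t)$ through the dual representation. The classical covariance identity gives $\disc(f\circ g)=(\det g)^{d(d-1)}\disc(f)$. Hence the transition factor is a power of $\det g$ of total degree $d(d-1)$. After matching conventions, this is the cocycle of $(\det E^\vee)^{\otimes d(d-1)}$.
\end{itemize}
Together these show that the local functions $\disc(a_0,\dots,a_d)$ glue to a global section of $\cL^{\otimes(2d-2)}\otimes(\det E^\vee)^{\otimes d(d-1)}$. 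They also show the section is independent of all choices, which is the sense of "naturally".

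I expect the main obstacle to be proving the identity $\disc(f\circ g)=(\det g)^{d(d-1)}\disc(f)$ over an arbitrary base ring rather than over a field, and getting the sign and dual conventions right.

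First, the identity is a polynomial identity with integer coefficients. So it suffices to verify it over $\mathbb Q$, or over an algebraically closed field, for generic $f$ and $g$.

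Second, over a field, write $f=c\prod_i(\beta_i s-\alpha_i t)$ and use
\[
  \disc(f)=c^{2d-2}\prod_{i<j}(\alpha_i\beta_j-\alpha_j\beta_i)^2.
\]
Substituting $g$ transforms each root vector $(\alpha_i,\beta_i)$ by $g^{-1}$, up to the convention for how $g$ acts. Each $2\times2$ minor $\alpha_i\beta_j-\alpha_j\beta_i$ then picks up a factor $\det g^{\pm1}$. The leading coefficient is absorbed so that the total factor is $(\det g)^{d(d-1)}$, coming from $2\binom d2=d(d-1)$ minors.

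A cleaner alternative is to note that $\mathrm{SL}_2$-invariance of $\disc$ is classical. The scalar matrices $g=\lambda I$ then act by $\lambda^{d(2d-2)}=(\lambda^2)^{d(d-1)}=(\det g)^{d(d-1)}$. These two facts together determine the character by which $\GL_2$ acts, namely $\det^{d(d-1)}$. This pins down the exponent without root bookkeeping.

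Finally, I will check which of $\det E$ or $\det E^\vee$ the exponent attaches to. Since $F$ lies in $\Sym^dE^\vee$, the coefficients transform contragrediently to the frame. I will confirm the answer on the tautological example $E=\OO\oplus\OO(-1)$ on $\bbP^1$, or by checking compatibility with the positivity of $\sigma_1=c_1(S^\vee)$.
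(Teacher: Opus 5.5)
Your proposal is correct, but it takes a different route from the paper.

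\emph{The paper's route.} It passes to a splitting cover and writes $F=c\prod_i\ell_i$, with $\ell_i$ local sections of $E^\vee$ and $c$ a local section of $\cL$. The expression $c^{2d-2}\prod_{i<j}(\ell_i\wedge\ell_j)^2$ is then visibly a section of $\cL^{\otimes(2d-2)}\otimes(\det E^\vee)^{\otimes d(d-1)}$. Each wedge lies in $\det E^\vee$, and the expression is unchanged under $\ell_i\mapsto\mu_i\ell_i$, $c\mapsto c/\prod_i\mu_i$. Symmetry in the roots then gives descent.

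\emph{Your route.} You define $\disc(F)$ chart by chart via the universal polynomial in $\bbZ[a_0,\dots,a_d]$. You glue using the covariance identity $\disc(f\circ g)=(\det g)^{d(d-1)}\disc(f)$, proved as an integral polynomial identity by checking it generically over an algebraically closed field. You give two ways to check it: root bookkeeping, which is essentially the paper's computation, or, more cleanly, $\mathrm{SL}_2$-invariance plus the action of scalars, since every invertible $2\times2$ matrix over an algebraically closed field is a scalar times an element of $\mathrm{SL}_2$.

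\emph{What each buys.} The paper's argument makes the line bundle transparent, with one factor of $\det E^\vee$ per ordered pair of roots and no conventions to track. Yours is more robust for an arbitrary scheme $B$, because the universal polynomial is defined everywhere. That includes points where $F$ vanishes identically on the fiber, a case the paper deliberately includes in $\cD_n$. The paper's splitting cover and descent are only sketched, and they are delicate exactly there.

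\emph{The convention check.} The step you defer is a two-line computation and is better done directly than by example. If $e'=eg$, then coordinates satisfy $x=gx'$, so the new coefficients are those of $f\circ g$. Meanwhile $e_0'\wedge e_1'=(\det g)\,e_0\wedge e_1$ gives $s'\wedge t'=(\det g)^{-1}\,s\wedge t$. Hence the local coefficient of a section of $(\det E^\vee)^{\otimes N}$ is multiplied by $(\det g)^N$. With $N=d(d-1)$ this matches the covariance identity, confirming $\det E^\vee$ rather than $\det E$.
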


\begin{proof}
The assertion is local on $B$. After passing to a splitting cover, write
\[
  F=c\prod_{i=1}^d \ell_i,
\]
where each $\ell_i$ is a local section of $E^\vee$ and $c$ is the leading scalar in the chosen trivialization of $\cL$. The discriminant is
\[
  \disc(F)=c^{2d-2}\prod_{i<j}(\ell_i\wedge \ell_j)^2.
\]
Each factor $\ell_i\wedge\ell_j$ is a section of $\det E^\vee$. There are $\binom d2$ unordered pairs, and each is squared, so the determinant factor is
\[
  (\det E^\vee)^{\otimes d(d-1)}.
\]
The discriminant is homogeneous of degree $2d-2$ in the coefficients of $F$, giving the factor $\cL^{\otimes(2d-2)}$.

The expression is symmetric in the roots. Hence, it descends from the splitting cover, and it defines a global section of the displayed line bundle.
\end{proof}

We turn to the proof of our main result. 

\begin{proof}[Proof of \cref{thm:main}]
Apply \cref{lem:disc-line} with
\[
  B=G_n,
  \qquad
  E=S,
  \qquad
  \cL=M=(\det V^\vee)^2,
  \qquad
  F=\Delta.
\]
Then
\[
  \disc(\Delta)
  \in
  H^0\left(
  G_n,
  M^{\otimes(2d-2)}\otimes(\det S^\vee)^{\otimes d(d-1)}
  \right).
\]
The line $M$ is a fixed one-dimensional vector space. It is constant over $G_n$, so
\[
  c_1\bigl(M^{\otimes(2d-2)}\bigr)=0.
\]
Therefore, the divisor class of $\cD_n$ is
\[
  [\cD_n]
  =c_1\left((\det S^\vee)^{\otimes d(d-1)}\right)
  =d(d-1)c_1(S^\vee).
\]
Since $d=n+1$ and $\sigma_1=c_1(S^\vee)$, this is
\[
  [\cD_n]=n(n+1)\sigma_1.
\]
The divisor is effective Cartier by \cref{lem:nonzero}. This proves the theorem.
\end{proof}

\begin{remark}\label{rem:weight}
Equivalently, the discriminant of a scalar binary form of degree $d$ is a relative invariant of $\GL_2$-weight $d(d-1)$. The formula above is the corresponding Chern class calculation on the rank-two bundle $S$.
\end{remark}

\section{A jet formula for higher contact}

We now give a second proof of the divisor formula, and record the higher-contact calculation.

\begin{convention}\label{conv:projective-bundle}
For a vector bundle $E$, we write $\bbP(E)$ for the bundle of one-dimensional subspaces of $E$. Its tautological line subbundle is $\OO_{\bbP(E)}(-1)\subset \pi^*E$, and
\[
  h=c_1\bigl(\OO_{\bbP(E)}(1)\bigr).
\]
For $\pi:\bbP(S)\to G_n$, this convention gives
\[
  \pi_*(h)=1,
  \qquad
  \pi_*(h^2)=c_1(S^\vee),
  \qquad
  \pi_*(h^3)=c_1(S^\vee)^2-c_2(S^\vee).
\]
\end{convention}

Let
\[
  \pi:\bbP(S)\to G_n
\]
be the universal parameter line. Set
\[
  h=c_1\bigl(\OO_{\bbP(S)}(1)\bigr),
  \qquad
  a=c_1(S^\vee).
\]
The universal determinant section induces a section of
\[
  \OO_{\bbP(S)}(d)\otimes M.
\]
The relative cotangent bundle of $\pi$ has first Chern class
\[
  c_1(\Omega_\pi)=-2h+\pi^*a.
\]
Henceforth, we suppress $\pi^*$ from the notation.

Let
\[
  \cL_d:=\OO_{\bbP(S)}(d)\otimes M.
\]
A marked point of the parameter line is a root of multiplicity at least $m$ precisely when the first $m$ relative jets of the determinant section vanish. Let
\[
  \cJ^{m-1}_\pi(\cL_d)
\]
be the relative jet bundle of order $m-1$. It has a filtration whose associated graded pieces are
\[
  \cL_d,
  \quad
  \cL_d\otimes\Omega_\pi,
  \quad
  \dots,
  \quad
  \cL_d\otimes\Omega_\pi^{\otimes(m-1)}.
\]
Thus
\[
  c_m\bigl(\cJ^{m-1}_\pi(\cL_d)\bigr)
  =
  \prod_{i=0}^{m-1}
  \left(dh+i(-2h+a)\right).
\]

\begin{proposition}\label{prop:jet-formula}
Let $d=n+1$. The refined class of the marked multiplicity-$\geq m$ locus for the determinant binary $d$-ic is
\[
  \prod_{i=0}^{m-1}
  \left((d-2i)h+i\sigma_1\right)
  \quad\text{on } \bbP(S).
\]
Its pushforward to $G_n$ is
\[
  \Theta_m
  :=
  \pi_*
  \prod_{i=0}^{m-1}
  \left((d-2i)h+i\sigma_1\right).
\]
When the marked incidence has the expected codimension and is generically one-to-one over its image, $\Theta_m$ is the ordinary fundamental class of the corresponding unmarked higher-contact locus.
\end{proposition}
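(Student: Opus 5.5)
The plan is to realize the marked multiplicity-$\ge m$ locus as the zero scheme of a section of the jet bundle $\cJ^{m-1}_\pi(\cL_d)$ on $\bbP(S)$, so that its refined class is that bundle's top Chern class.

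\textbf{Step 1: the jet section.} The determinant section $\Delta$ of $\cL_d=\OO_{\bbP(S)}(d)\otimes M$ induces a section $j^{m-1}(\Delta)$ of $\cJ^{m-1}_\pi(\cL_d)$ by taking its relative $(m-1)$-jet along the fibers of $\pi$. In a local trivialization, with fiber coordinate $u$ on $\bbP(L)$, this is the vector $(\Delta_L,\partial_u\Delta_L,\dots,\partial_u^{m-1}\Delta_L)$. Characteristic zero guarantees that vanishing of these derivatives at a point is equivalent to the point being a root of multiplicity $\ge m$, scheme-theoretically as well, since the ideal generated by the derivatives agrees with the contact ideal. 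The zero scheme $Z_m$ of $j^{m-1}(\Delta)$ is therefore the marked locus. Its refined (Fulton) class is the localized top Chern class $c_m(\cJ^{m-1}_\pi(\cL_d))\cap[\bbP(S)]$. This class lies on $Z_m$, and its image in $A^*(\bbP(S))$ is $c_m$ itself.

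\textbf{Step 2: the Chern class.} The filtration of the jet bundle stated just before the proposition, together with the Whitney formula, gives
\[
c_m=\prod_{i=0}^{m-1}c_1(\cL_d\otimes\Omega_\pi^{\otimes i})=\prod_{i=0}^{m-1}\bigl(dh+i(-2h+\sigma_1)\bigr).
\]
Here we use $c_1(M)=0$ and $c_1(\Omega_\pi)=-2h+\pi^*\sigma_1$. The latter follows from the relative Euler sequence $0\to\Omega_\pi\to\pi^*S^\vee(-1)\to\OO\to0$, which gives $c_1(\Omega_\pi)=c_1(S^\vee)-2h$. Rewriting each factor as $(d-2i)h+i\sigma_1$ yields the displayed product.

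\textbf{Step 3: pushforward and unmarked loci.} Applying the proper pushforward $\pi_*$ gives $\Theta_m$. Suppose $Z_m$ has expected codimension $m$. Then, because $\bbP(S)$ is smooth, $Z_m$ is locally a complete intersection. The refined class then equals the fundamental cycle $[Z_m]$, with multiplicities, by Fulton's theorem on regular embeddings. If in addition $\pi|_{Z_m}$ is generically one-to-one onto its image $Y_m$ and $Z_m$ is generically reduced, then $\pi_*[Z_m]=[Y_m]$. As a check, for $m=2$ we get $\pi_*(dh\cdot((d-2)h+\sigma_1))$. Using \cref{conv:projective-bundle}, this equals $d(d-2)\sigma_1+d\sigma_1=d(d-1)\sigma_1$, which agrees with \cref{thm:main}.

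\textbf{Main obstacle.} The hard part is the scheme-theoretic identification in Step 3: showing that $Z_m$ is generically reduced, so that the refined class has multiplicity one on each component rather than a larger multiplicity. I would verify this at a general point of each component. Take a diagonal pencil whose determinant has exactly one root of multiplicity exactly $m$. Then check that the jet map is a submersion there, i.e.\ that its differential surjects onto the $m$-dimensional fiber of $\cJ^{m-1}_\pi(\cL_d)$. Deforming the $\lambda_i$ in the construction of \cref{lem:nonzero} realizes arbitrary perturbations of the roots, which should suffice. This step is the one I expect to need the most care.
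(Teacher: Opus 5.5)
\textbf{Where the proposal matches the paper, and where it breaks.} Your Steps 1--2 and the pushforward are the paper's proof: the jet section of $\cJ^{m-1}_\pi(\cL_d)$, its top Chern class read off from the filtration with $c_1(\Omega_\pi)=-2h+\sigma_1$, then $\pi_*$. You are right that the last sentence of the proposition also needs $Z_m$ to be generically reduced. The paper's proof does not address this. The gap is in your plan for that step: your test point is not a general point of $Z_m$, and the submersion you want to check fails there.

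\textbf{Why the diagonal test point fails.} Take a diagonal pencil $\langle\sum x_i^2,\sum\lambda_i x_i^2\rangle$ with $\lambda_1=\dots=\lambda_m=\lambda$.
\begin{itemize}
\item It has Segre symbol $[(1,\dots,1),1,\dots,1]$. Its stabilizer contains $O(m)$ acting on the first $m$ coordinates, so its stratum has codimension $m-1+\binom{m}{2}$ in $G_n$. The image of $Z_m$ has codimension only $m-1$.
\item At such a point the jet map has rank at most one for every $m\ge 2$. Since $Q_0-\lambda Q_\infty$ has corank $m$, a first-order deformation $(Q_\infty+\epsilon R,\,Q_0+\epsilon P)$ changes the determinant form by $\delta F(\mu)=\sum_i(P-\mu R)_{ii}\prod_{j\ne i}(\lambda_j-\mu)$. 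This is divisible by $(\mu-\lambda)^{m-1}$, and moving the marked point $p$ changes only $F^{(m-1)}(p)$.
\item Perturbing the coincident $\lambda_i$ does not help. The local factor $\prod_{k=1}^m(\mu-\lambda-\epsilon_k)$ has coefficients the elementary symmetric functions $e_k(\epsilon)$, and only $e_1$ moves to first order.
\end{itemize}
So the check would fail. A computation on this deeper stratum says nothing about generic reducedness anyway.

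\textbf{The correct test point.} The check must be made at a pencil of Segre type $[m,1^{d-m}]$, where $Q_\infty^{-1}Q_0$ has a single $m\times m$ Jordan block at the $m$-fold root.
\begin{itemize}
\item Under the stated hypotheses this is the general point of every component of $Z_m$. This stratum has finite stabilizer and codimension exactly $m-1$. Any other Segre type with a root of multiplicity $\ge m$ has further root coincidences or a positive-dimensional stabilizer, hence larger codimension. Components over pencils with $\det\equiv 0$ are excluded by the generically one-to-one hypothesis.
\item For $m=2$, take $Q_\infty=\begin{pmatrix}0&1\\1&0\end{pmatrix}$ and $Q_0=\begin{pmatrix}\epsilon&\lambda\\ \lambda&1\end{pmatrix}$, plus a generic diagonal block. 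Then $\det(Q_0-\mu Q_\infty)=\epsilon-(\mu-\lambda)^2$, so $F(p)$ moves to first order.
\item In general, let $f$ be a monic degree-$d$ polynomial near $(\mu-\lambda)^m\prod_{i>m}(\mu-\lambda_i)$, let $C$ be its companion matrix, and let $B$ be the classical Hankel symmetrizer of $C$. Thus $B$ is symmetric, $\det B=\pm1$, and $BC$ is symmetric. The pencils $\langle B,BC\rangle$ have determinant form $\pm f$. They are of type $[m,1^{d-m}]$ at the base point because $C$ is regular. They realize every nearby $(m-1)$-jet at $\lambda$.
\end{itemize}
Hence the jet section is transverse to zero there, so $Z_m$ is smooth at its general point. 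With this replacement your Step 3 goes through.
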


\begin{proof}
The determinant section on $\bbP(S)$ gives a section of the relative jet bundle
\[
  \cJ^{m-1}_\pi(\cL_d).
\]
The zero scheme of this jet section is the marked multiplicity-$\geq m$ locus. Its refined top Chern class is
\[
  c_m\bigl(\cJ^{m-1}_\pi(\cL_d)\bigr).
\]
Using the filtration above and the identity $c_1(\Omega_\pi)=-2h+a$, this top Chern class is
\[
  \prod_{i=0}^{m-1}\left(dh+i(-2h+a)\right)
  =
  \prod_{i=0}^{m-1}\left((d-2i)h+ia\right).
\]
Then, pushing forward along $\pi$ gives the stated class on $G_n$.
\end{proof}

\begin{corollary}\label{cor:m2}
For $m=2$,
\[
  \Theta_2=d(d-1)\sigma_1.
\]
\end{corollary}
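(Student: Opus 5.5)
The plan is to specialize \cref{prop:jet-formula} to $m=2$, expand the product, and push forward along $\pi$ using the formulas recorded in \cref{conv:projective-bundle}. For $m=2$ the product has two factors, $i=0$ and $i=1$:
\[
  \Theta_2=\pi_*\bigl(dh\cdot((d-2)h+\sigma_1)\bigr)
  =\pi_*\bigl(d(d-2)h^2+d\,h\,\sigma_1\bigr).
\]
Here $\sigma_1$ stands for $\pi^*\sigma_1$, following the paper's suppression of $\pi^*$.

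Next I apply the projection formula, which gives $\pi_*(h\cdot\pi^*\sigma_1)=\pi_*(h)\,\sigma_1=\sigma_1$. Together with $\pi_*(h^2)=c_1(S^\vee)=\sigma_1$ from \cref{conv:projective-bundle}, this yields
\[
  \Theta_2=d(d-2)\sigma_1+d\sigma_1=d(d-1)\sigma_1.
\]
This agrees with \cref{thm:main}.

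The only point needing care is the sign convention in $\pi_*(h^2)$. With $\bbP(S)$ the bundle of lines and $\OO(-1)\subset\pi^*S$, the Grothendieck relation reads $h^2+c_1(S)h+c_2(S)=0$. Since $c_1(S)=-\sigma_1$, this becomes $h^2=\sigma_1 h-c_2(S)$, so $\pi_*(h^2)=\sigma_1$, consistent with the convention. I would also check the identity $c_1(\Omega_\pi)=-2h+a$, which was used in deriving the factor $(d-2)h+\sigma_1$; it follows from the relative Euler sequence $0\to\Omega_\pi\to\pi^*S^\vee(-1)\to\OO\to0$. I do not expect a genuine obstacle here, because the computation is purely formal once \cref{prop:jet-formula} is granted.
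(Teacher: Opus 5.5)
Your proposal is correct and is essentially the paper's proof: specialize the jet product to $m=2$, expand to $d(d-2)h^2+d\,\sigma_1 h$, and push forward using $\pi_*(h)=1$ and $\pi_*(h^2)=\sigma_1$ to get $d(d-1)\sigma_1$. Your additional sign checks, via the Grothendieck relation $h^2+c_1(S)h+c_2(S)=0$ and the relative Euler sequence giving $c_1(\Omega_\pi)=-2h+\sigma_1$, agree with the paper's conventions.
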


\begin{proof}
For $m=2$, \cref{prop:jet-formula} gives
\[
  \Theta_2
  =
  \pi_*\left(dh\bigl((d-2)h+a\bigr)\right).
\]
Using
\[
  \pi_*(h)=1,
  \qquad
  \pi_*(h^2)=a,
\]
we obtain
\[
\begin{aligned}
  \Theta_2
  &=d(d-2)\pi_*(h^2)+da\,\pi_*(h) \\
  &=d(d-2)a+da \\
  &=d(d-1)a.
\end{aligned}
\]
Since $a=\sigma_1$, this proves the claim.
\end{proof}

We now provide an example. 

\begin{example}[Triple-root locus]\label{ex:triple}
For $m=3$, the formula gives
\[
  \Theta_3
  =
  \pi_*\left[
  dh\bigl((d-2)h+a\bigr)\bigl((d-4)h+2a\bigr)
  \right].
\]
Expanding, we get
\[
\begin{aligned}
  &dh\bigl((d-2)h+a\bigr)\bigl((d-4)h+2a\bigr) \\
  &\qquad =d(d-2)(d-4)h^3+d(3d-8)ah^2+2da^2h.
\end{aligned}
\]
By \cref{conv:projective-bundle},
\[
  \pi_*(h)=1,
  \qquad
  \pi_*(h^2)=a,
  \qquad
  \pi_*(h^3)=a^2-c_2(S^\vee).
\]
Therefore
\[
  \Theta_3
  =d(d-1)(d-2)a^2-d(d-2)(d-4)c_2(S^\vee).
\]
Equivalently,
\[
  \Theta_3
  =d(d-1)(d-2)\sigma_1^2-d(d-2)(d-4)\sigma_{1,1}.
\]
\end{example}

\begin{example}[The quartic case]\label{ex:quartic}
When $n=3$, one has $d=4$ and $G_3\cong \Gr(2,10)$. The formulas above give
\[
  \Theta_2=12\sigma_1,
\]
\[
  \Theta_3=24\sigma_1^2,
\]
and
\[
\begin{aligned}
  \Theta_4
  &=\pi_*\left[4h(2h+a)(2a)(-2h+3a)\right] \\
  &=24a^3+32a\,c_2(S^\vee).
\end{aligned}
\]
Thus, the universal jet formula specializes to the contact calculations for determinant binary quartics.
\end{example}

\section{Collision strata of binary forms}

The divisor $\cD_n$ is the pullback of the discriminant hypersurface in the space of binary forms. The same principle applies to all determinant-root collision strata.

Let $U$ be a two-dimensional vector space. For a partition
\[
  \lambda=(\lambda_1,\dots,\lambda_r)\vdash d,
\]
let
\[
  B_\lambda\subseteq \Sym^dU^\vee
\]
be the closed cone obtained as the closure of the locus of binary forms of the shape
\[
  F=\prod_{j=1}^r \ell_j^{\lambda_j},
\]
where the $\ell_j\in U^\vee$ are pairwise nonproportional. This is the binary-form collision stratum of type $\lambda$.

The cone $B_\lambda$ is invariant under $\GL(U)$. Hence, it determines an equivariant Chow class
\[
  [B_\lambda]_{\GL_2}=P_\lambda(c_1,c_2)
  \in A^*_{\GL_2}(\operatorname{pt})=\bbZ[c_1,c_2].
\]
Equivariant Chow theory is used here in the sense of Edidin--Graham \cite{EG98}.

Now, form the vector bundle
\[
  \cE_d:=\Sym^dS^\vee\otimes M
\]
on $G_n$. The universal determinant section is a section
\[
  \Delta:G_n\to \cE_d.
\]
The collision cone $B_\lambda$ then globalizes to a relative closed cone
\[
  \cB_\lambda(S,M)\subset \cE_d.
\]
We define the determinant-collision locus
\[
  \cD_\lambda
  :=
  \Delta^{-1}\bigl(\cB_\lambda(S,M)\bigr).
\]
Set-theoretically, $\cD_\lambda$ is the locus of pencils whose determinant binary form lies in the collision stratum $B_\lambda$.

\begin{proposition}\label{prop:collision}
The refined Chow class of the determinant-collision locus $\cD_\lambda$ is
\[
  [\cD_\lambda]^{\mathrm{ref}}
  =
  P_\lambda\bigl(c_1(S^\vee),c_2(S^\vee)\bigr).
\]
If $\cD_\lambda$ has the expected codimension and no excess components, then this refined class is the ordinary fundamental class of $\cD_\lambda$.
\end{proposition}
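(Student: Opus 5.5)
The plan is to express $\cD_\lambda$ as a refined pullback of $B_\lambda$ through a classifying map for the pair $(S,M)$, and then to read off the class from the equivariant class $P_\lambda$. Let $P\to G_n$ be the principal $\GL_2$-bundle of frames of $S^\vee$; for simplicity take $M$ trivial, since it is a fixed one-dimensional vector space. Then $\cE_d=P\times^{\GL_2}\Sym^dU^\vee$, where $U^\vee$ is the standard representation, and $\cB_\lambda(S,M)=P\times^{\GL_2}B_\lambda$. Two structures are used here. First, the Edidin--Graham construction yields a characteristic homomorphism
\[
A^*_{\GL_2}(\operatorname{pt})\to A^*(G_n), \qquad c_i\mapsto c_i(S^\vee).
\]
Second, the equivariant fundamental class $[B_\lambda]_{\GL_2}$, which is a class on the representation $\Sym^dU^\vee$, is carried by this homomorphism to the class $[\cB_\lambda(S,M)]\in A^*(\cE_d)$. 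Concretely, one chooses an approximation $E\GL_2\to B\GL_2$ and a classifying map $f:G_n\to B\GL_2$ (up to an affine bundle), and observes that $\cE_d\to G_n$ is the pullback of the associated bundle $\Sym^dU^\vee_{\GL_2}\to B\GL_2$. The cone $\cB_\lambda$ is then the flat pullback of $(B_\lambda)_{\GL_2}$ along the induced map of total spaces, which is flat because it is a pullback of the flat map $P\to\operatorname{pt}$, giving
\[
[\cB_\lambda(S,M)]=\tilde f^*[B_\lambda]_{\GL_2}.
\]

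Next I define the refined class as the refined Gysin pullback along the zero section. Consider the fiber square formed by $\Delta:G_n\to\cE_d$ and $\cB_\lambda\hookrightarrow\cE_d$. Since $\cE_d$ is a vector bundle over the smooth variety $G_n$, the section $\Delta$ is a regular embedding, so
\[
[\cD_\lambda]^{\mathrm{ref}}:=\Delta^![\cB_\lambda]\in A_*(\cD_\lambda)
\]
makes sense. Its image in $A^*(G_n)$ equals $\Delta^*[\cB_\lambda]$. Moreover $\Delta$ is homotopic to the zero section $s_0$, and $s_0^*=\Delta^*=(p^*)^{-1}$ by homotopy invariance. Pulling $[\cB_\lambda]=\tilde f^*[B_\lambda]_{\GL_2}$ back along the zero section therefore gives the image of $[B_\lambda]_{\GL_2}\in A^*_{\GL_2}(\mathrm{pt})\cong A^*_{\GL_2}(\Sym^dU^\vee)$ under the characteristic homomorphism. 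That image is $P_\lambda(c_1(S^\vee),c_2(S^\vee))$.

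The last clause of the proposition is the standard comparison in intersection theory (Fulton, Prop.~7.1). If $\cD_\lambda$ has codimension equal to $\operatorname{codim}(B_\lambda,\Sym^dU^\vee)$ everywhere, then $\Delta^![\cB_\lambda]$ is a cycle supported on the components of $\cD_\lambda$, with positive intersection multiplicities. Those multiplicities equal $1$ when the intersection is generically transverse, i.e.\ generically reduced along each component. Because $\cD_\lambda$ is defined as a scheme-theoretic preimage, this says that "the ordinary fundamental class" should be read as the fundamental cycle of the preimage scheme; I would remark that this agrees with the reduced class when that scheme is generically reduced.

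The main obstacle is the compatibility between equivariant classes and the twisted cone, that is, the identification $[\cB_\lambda]=\tilde f^*[B_\lambda]_{\GL_2}$. The subtle points are the twist by $M$ and the fact that $G_n$ is not itself a classifying space. I would handle the twist by enlarging the group to $\GL_2\times\mathbb G_m$, with $M$ given by the second factor. The $\mathbb G_m$-weight then only introduces $c_1(M)=0$, which justifies replacing $P_\lambda$ by its specialization; in fact $B_\lambda$ is a cone, so scaling invariance means $M$ acts through the scalar subgroup and contributes nothing. I would handle the classifying issue by the Edidin--Graham finite-dimensional approximations, choosing a sufficiently high-dimensional representation.
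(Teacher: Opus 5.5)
Your proposal is correct and follows the same route as the paper. You realize $\cB_\lambda(S,M)$ as the cone associated to $B_\lambda$, so that its class is $P_\lambda$ with $c_i\mapsto c_i(S^\vee)$; you discard the constant factor $M$; and you apply the refined pullback $\Delta^!$, supplying details the paper leaves implicit: the Edidin--Graham comparison, the identity $\Delta^*=(p^*)^{-1}$, and the multiplicity caveat from Fulton, Prop.~7.1 (generic reducedness) that the paper's last sentence glosses over. One small correction: the cone property of $B_\lambda$ is not by itself why $M$ contributes nothing, since a nontrivial twist $\cL$ does change the class, as the factor $\cL^{\otimes(2d-2)}$ in \cref{lem:disc-line} shows; the work is done only by $c_1(M)=0$, which you also invoke.
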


\begin{proof}
Since $\cB_\lambda(S,M)\subset \cE_d$ is the associated relative cone to the $\GL_2$-invariant cone $B_\lambda\subset \Sym^dU^\vee$, its class in the Chow ring of $\cE_d$ is obtained from the equivariant class $P_\lambda(c_1,c_2)$ by substituting
\[
  c_i=c_i(S^\vee).
\]
The factor $M$ is constant, and contributes no Chern classes. Pulling this class back by the section $\Delta:G_n\to \cE_d$ gives the refined inverse image class
\[
  \Delta^![\cB_\lambda(S,M)]
  =P_\lambda\bigl(c_1(S^\vee),c_2(S^\vee)\bigr).
\]
This is precisely $[\cD_\lambda]^{\mathrm{ref}}$. Under the expected-codimension hypothesis, refined pullback agrees with the ordinary fundamental class.
\end{proof}

\begin{example}[The discriminant partition]\label{ex:disc-partition}
For
\[
  \lambda=(2,1^{d-2}),
\]
the collision stratum is the discriminant hypersurface in $\Sym^dU^\vee$. Its equivariant class is
\[
  P_\lambda(c_1,c_2)=d(d-1)c_1.
\]
Therefore \cref{prop:collision} gives
\[
  [\cD_\lambda]=d(d-1)c_1(S^\vee),
\]
which is \cref{thm:main}.
\end{example}

\section{The squarefree locus and orbit parameters}

Let
\[
  \cU_n:=G_n\setminus \cD_n
\]
be the squarefree locus. A pencil $L\in \cU_n$ has exactly $d=n+1$ singular quadrics, all with multiplicity one.

\begin{proposition}\label{prop:squarefree-orbits}
On a dense open subset of $\cU_n$, the $\PGL(V)$-orbit of a pencil is determined by the unordered configuration of the $d$ roots of its determinant binary form on $\bbP^1$, modulo the natural action of $\PGL_2$ on the parameter line. Consequently, the coarse orbit space of the squarefree locus is birational to
\[
  \Conf_d(\bbP^1)/(\PGL_2\times S_d).
\]
\end{proposition}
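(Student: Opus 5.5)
The plan is to use the classical simultaneous diagonalization of a generic pencil. First, let $Q_1\in L$ be a nonsingular member; one exists since $\Delta_L\not\equiv 0$ on $\cU_n$. Then $Q_1^{-1}Q_0$ is an endomorphism of $V$. Its characteristic polynomial is, up to a nonzero scalar, the dehomogenized determinant binary form. When this polynomial has $d$ distinct roots, the endomorphism is semisimple with one-dimensional eigenspaces $V_i$. The $V_i$ are pairwise orthogonal for both $Q_0$ and $Q_1$, by the usual symmetric argument: $\lambda_i Q_1(v_i,v_j)=Q_0(v_i,v_j)=\lambda_j Q_1(v_i,v_j)$. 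Moreover $Q_1|_{V_i}\neq 0$ by nondegeneracy. Since $\bbk$ is algebraically closed, we may rescale each $v_i$ so that $Q_1=\sum x_i^2$ and $Q_0=\sum\lambda_i x_i^2$. This is exactly the normal form of \cref{lem:nonzero}, so every pencil in $\cU_n$, and not merely a dense open subset, has this form in suitable coordinates.

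The second step identifies the orbits. Suppose two pencils $L,L'\in\cU_n$ have root configurations that are equivalent under $\PGL_2$. A change of basis of the pencil acts on the parameter line $\bbP(L)$ by $\PGL_2=\PGL(L)$ and moves the roots accordingly. So after rechoosing bases we may assume both determinant forms have the same root set $\{[-\lambda_i:1]\}$, up to relabeling. Relabeling is harmless, since permuting coordinates is in $\PGL(V)$. We must also handle the case where a root lies at $\infty$, or where the chosen $Q_1$ varies. I would fix this by first applying an element of $\PGL_2$ to move all roots into the affine chart, which is always possible since there are only finitely many roots. In normal form the pencils then coincide, so $L'=g\cdot L$ for some $g\in\GL(V)$. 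Conversely, $g\in\GL(V)$ carries $L$ isomorphically to $g\cdot L$ and transforms $\Delta$ by the scalar $\det(g)^{-2}$, so it preserves the root configuration under the induced identification $\bbP(L)\cong\bbP(gL)$. Hence the root-configuration map $\cU_n\to\Conf_d(\bbP^1)/(\PGL_2\times S_d)$ is well defined. Its fibers are exactly $\PGL(V)$-orbits, and it is surjective: given $d$ distinct points, move them off $\infty$ and use the diagonal pencil.

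The last step passes from an orbit-separating surjective morphism to a birational statement about coarse quotients. I expect this to be the main obstacle, since a bijection on points need not be an isomorphism of quotients. The map $\cU_n\to\Conf_d(\bbP^1)/(\PGL_2\times S_d)$ is a $\PGL(V)$-invariant morphism (it is algebraic, being built from the coefficients of $\Delta$), and its geometric fibers are single orbits. By Rosenlicht's theorem there is a dense invariant open subset of $\cU_n$ admitting a geometric quotient. The induced map from this quotient to the target is bijective on points, hence quasi-finite of degree $1$ generically. In characteristic zero it is therefore birational, using Zariski's main theorem and normality of the target, which is a quotient of a smooth variety by a finite group of a geometric quotient. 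This explains why the statement is phrased on a dense open subset and up to birationality. I would also remark that for $d\le 3$ the target is a point, consistent with $\cU_n$ being a single orbit, so no special cases need separate treatment.
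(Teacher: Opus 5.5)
Your argument is correct and follows the same route as the paper. The core step is simultaneous diagonalization via the operator $Q_1^{-1}Q_0$: it is self-adjoint for the nonsingular member $Q_1$ and has distinct eigenvalues, so every pencil in $\cU_n$ (not just a dense open subset) is equivalent to $\langle \sum x_i^2, \sum \lambda_i x_i^2\rangle$. Your additional steps fill in points the paper's proof leaves implicit rather than changing the approach: checking that the root configuration is a $\PGL(V)$-invariant via the scaling $\det(g)^{-2}$, choosing compatible pencil bases with no root at $\infty$ so that $\PGL_2$-equivalent configurations give equivalent pencils, and deriving the birational statement from Rosenlicht's theorem plus the degree-one argument in characteristic zero.
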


\begin{proof}
Let $L\in \cU_n$. Since the determinant binary form is not identically zero, the pencil contains a nonsingular quadric. Choose one and call it $Q_\infty$. Choose another member $Q_0$ of the pencil. The quadratic form $Q_\infty$ identifies $V$ with $V^\vee$, and the operator
\[
  T:=Q_\infty^{-1}Q_0
\]
is self-adjoint with respect to $Q_\infty$. The roots of
\[
  \det(Q_0-\lambda Q_\infty)
\]
are the eigenvalues of $T$. On the squarefree locus, these eigenvalues are distinct. Hence, $T$ is diagonalizable, and its eigenspaces are pairwise orthogonal for $Q_\infty$.

Therefore, there is a basis of $V$ in which
\[
  Q_\infty=\sum_{i=1}^d x_i^2,
  \qquad
  Q_0=\sum_{i=1}^d \lambda_i x_i^2,
\]
with the $\lambda_i$ distinct. Thus, the pencil is projectively equivalent to
\[
  \left\langle
  \sum_{i=1}^d x_i^2,
  \sum_{i=1}^d \lambda_i x_i^2
  \right\rangle.
\]
The unordered set $\{\lambda_1,\dots,\lambda_d\}\subset \bbP^1$ is precisely the root divisor of the determinant binary form.

Changing the basis of the pencil changes the coordinate on the parameter line by $\PGL_2$. Permuting the diagonal basis vectors permutes the roots. Hence, the generic orbit is determined by an unordered configuration of $d$ points of $\bbP^1$, modulo the action of $\PGL_2$.
\end{proof}

The dimension count agrees with this description. Since
\[
  \dim W=\binom{n+2}{2}=\frac{(n+1)(n+2)}2,
\]
we have
\[
  \dim G_n=2\left(\binom{n+2}{2}-2\right)=(n+1)(n+2)-4.
\]
Meanwhile,
\[
  \dim\PGL(V)=(n+1)^2-1.
\]
Therefore,
\[
\begin{aligned}
  \dim G_n-\dim\PGL(V)
  &=(n+1)(n+2)-4-\bigl((n+1)^2-1\bigr) \\
  &=n-2.
\end{aligned}
\]
On the other hand,
\[
  \dim\left(\Conf_{n+1}(\bbP^1)/(\PGL_2\times S_{n+1})\right)
  =(n+1)-3=n-2.
\]

For $n=3$, this quotient is one-dimensional, and it is measured by the classical $j$-invariant of a binary quartic. This is what we used in \cite{Krishna26} to construct the one-parameter family of codimension-one orbit closures in $\Gr(2,10)$.

\section{The case of pencils in \texorpdfstring{$\bbP^3$}{P3}}

Let $n=3$, so $d=4$ and
\[
  G_3=\Gr(2,\Sym^2\bbk^{4,\vee})\cong \Gr(2,10).
\]
The determinant of the universal pencil is a binary quartic. The discriminant divisor is
\[
  \cD_3=\{L:\det(q)|_L\text{ has a multiple root}\}.
\]
By \cref{thm:main},
\[
  [\cD_3]=4\cdot 3\,\sigma_1=12\sigma_1.
\]

This is the boundary divisor class appearing in the study of codimension-one $\PGL_4$-orbit closures in $\Gr(2,10)$. More precisely, the rational $j$-map on the squarefree locus is obtained by assigning to a pencil the $j$-invariant of its determinant binary quartic. The universal discriminant theorem explains why the boundary of this construction has class $12\sigma_1$.

The quartic case also illustrates the distinction between determinant-collision strata and fine Segre strata. The determinant root type $2+1+1$ gives the divisor $\cD_3$. Inside deeper boundary strata, however, one must distinguish whether a singular quadric has rank three, rank two, or rank one. These refinements are invisible to the determinant root multiplicities alone and require separate incidence calculations.

\section*{Acknowledgments}

The author thanks Professor Joe Harris for his guidance.

\bigskip

\noindent\textsc{Department of Mathematics, Harvard University, 1 Oxford Street, Cambridge, MA 02138, USA}\par
\noindent\textit{Email address:} \texttt{akrishna@college.harvard.edu}

\end{document}